\def\Log{\text{\rm Log\,}}
\def\dis{\displaystyle}
\newtheorem{thm}{Theorem}
\newtheorem{prop}[thm]{Proposition}
\newtheorem{conj}{Conjecture}
\begin{document}

\baselineskip=15pt

\title[Estimates for the Bergman Kernel]
{Estimates for the Bergman Kernel \\
and the Multidimensional Suita Conjecture}

\thanks {The first named author was supported by the Ideas Plus 
grant 0001/ID3/2014/63 of the Polish Ministry Of Science and 
Higher Education and the second named author by 
the Polish National Science Centre grant  
2011/03/B/ST1/04758}

\def\nl{\newline\phantom{a}\hskip 7pt}

\author{Zbigniew B\l ocki, W\l odzimierz Zwonek}
\address{Uniwersytet Jagiello\'nski \nl Instytut Matematyki
\nl \L ojasiewicza 6 \nl 30-348 Krak\'ow \nl Poland
\nl {\rm Zbigniew.Blocki@im.uj.edu.pl 
\nl Wlodzimierz.Zwonek@im.uj.edu.pl}}

\begin{abstract}
We study the lower bound for the Bergman kernel in terms
of volume of sublevel sets of the pluricomplex Green 
function. We show that it implies a bound
in terms of volume of the Azukawa indicatrix which 
can be treated as a multidimensional version of the 
Suita conjecture. We also prove that the corresponding upper 
bound holds for convex domains and discuss it in bigger
detail on some convex complex ellipsoids.
\end{abstract}

\makeatletter
\@namedef{subjclassname@2010}{%
  \textup{2010} Mathematics Subject Classification}
\makeatother



\maketitle

\section{Introduction and Statement of Main Results}

Let $\Omega$ be a pseudoconvex domain in $\mathbb C^n$. The following
lower bound for the Bergman kernel in terms of the pluricomplex
Green function was recently proved in \cite{B5} using 
methods of the $\bar\partial$-equation: for any $t\leq 0$ 
and $w\in\Omega$ one has
\begin{equation}\label{est1}
K_\Omega(w)\geq\frac 1{e^{-2nt}\lambda(\{G_{\Omega,w}<t\})}.
\end{equation} 
Here 
  $$K_\Omega(w)=\sup\{|f(w)|^2:f\in\mathcal O(\Omega),\ 
       \int_\Omega|f|^2d\lambda\leq 1\}$$
and
  $$G_{\Omega,w}=\sup\{u\in PSH^-(\Omega):u\leq\log|\cdot-w|+C
      \text{ near }w\}.$$
The constant in \eqref{est1} is optimal for every $t$, for
example we have the equality if $\Omega$ is a ball centered at $w$.
The behaviour of the right-hand side of \eqref{est1} as 
$t\to-\infty$ seems of particular interest. For example for 
$n=1$ we easily have
\begin{equation}\label{lim}
\lim_{t\to-\infty}e^{-2t}\lambda(\{G_{\Omega,w}<t\})
     =\frac\pi{(c_\Omega(w))^2},
\end{equation}
where 
  $$c_\Omega(w)=\exp\lim_{z\to w}\big(G_{\Omega,w}(z)
     -\log|z-w|\big)$$
is the logarithmic capacity of the complement of $\Omega$ 
with respect to $w$. This gave another proof in \cite{B5} 
of the Suita conjecture \cite{S} 
\begin{equation}\label{suita}
c_\Omega^2\leq\pi K_\Omega,
\end{equation}
originally shown in \cite{B4}.

Our first result is a counterpart of \eqref{lim} in higher 
dimensions:

\begin{thm} \label{thm1}
Assume that $\Omega$ is a bounded hyperconvex
domain in $\mathbb C^n$. Then 
  $$\lim_{t\to-\infty}e^{-2nt}\lambda(\{G_{\Omega,w}<t\})
     =\lambda(I^A_\Omega(w)),$$
where
  $$I^A_\Omega(w)=\{X\in\mathbb C^n:
     \varlimsup_{\zeta\to 0}\big(G_{\Omega,w}(w+\zeta X)
       -\log|\zeta|\big)<0\}$$
is the Azukawa indicatrix of $\Omega$ at $w$.
\end{thm}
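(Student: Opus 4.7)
The approach is a rescaling argument combined with dominated convergence. Writing $s:=e^t$ and
$$u_t(\zeta):=G_{\Omega,w}(w+s\zeta)-\log s,$$
the change of variables $z=w+s\zeta$ in the integral defining $\lambda(\{G_{\Omega,w}<t\})$ yields
$$e^{-2nt}\lambda(\{G_{\Omega,w}<t\})=\lambda\bigl(\{\zeta\in\mathbb{C}^n:u_t(\zeta)<0\}\bigr),$$
so it suffices to prove $\lambda(\{u_t<0\})\to\lambda(I^A_\Omega(w))$ as $t\to-\infty$. The scheme is to combine a uniform confinement of the sublevel sets in a fixed ball with pointwise convergence of $u_t$ to $\log A_\Omega(w;\cdot)$, and then invoke dominated convergence on indicator functions.

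First I would secure the uniform confinement. Since $\Omega$ is bounded, fix $R>0$ with $\Omega\subset B(w,R)$; monotonicity of the Green function under inclusion gives $G_{\Omega,w}(z)\geq\log(|z-w|/R)$, hence
$$u_t(\zeta)\geq\log|\zeta|-\log R.$$
In particular both $\{u_t<0\}$ and the Azukawa indicatrix $I^A_\Omega(w)$ lie in $B(0,R)$, so $\mathbf{1}_{B(0,R)}$ serves as a common dominating function, and $I^A_\Omega(w)$ is a bounded open balanced subset of $\mathbb{C}^n$.

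Next I would establish that $u_t(X)\to\log A_\Omega(w;X)$ for every $X\in\mathbb{C}^n\setminus\{0\}$. The one-variable function $\xi\mapsto G_{\Omega,w}(w+\xi X)-\log|\xi|$ is subharmonic and bounded above near $\xi=0$, with upper semicontinuous regularization at $0$ equal to $\log A_\Omega(w;X)$. What is required is that under the hyperconvexity assumption the $\varlimsup$ in the definition of the Azukawa pseudometric is a genuine limit, independent of the direction of approach of $\xi\to 0$; this is the central technical point and rests on Demailly's continuity of the pluricomplex Green function for hyperconvex domains, together with the resulting continuity of the Azukawa pseudometric. Granted this, $u_t(X)=G_{\Omega,w}(w+sX)-\log s$ with real $s\to 0^+$ converges to $\log A_\Omega(w;X)$.

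Finally I would pass to the limit of measures. Since the Minkowski function of $I^A_\Omega(w)$ is the continuous function $A_\Omega(w;\cdot)$, strictly positive on $\mathbb{C}^n\setminus\{0\}$, integration in polar coordinates shows
$$\lambda\bigl(\partial I^A_\Omega(w)\bigr)=\lambda\bigl(\{A_\Omega(w;\cdot)=1\}\bigr)=0.$$
Combined with the pointwise convergence from the previous step, this gives $\mathbf{1}_{\{u_t<0\}}\to\mathbf{1}_{I^A_\Omega(w)}$ almost everywhere on $\mathbb{C}^n$, and the dominated convergence theorem, with dominator $\mathbf{1}_{B(0,R)}$, delivers the claimed identity. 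The main obstacle, as indicated, lies in the convergence step: it is the passage from the $\varlimsup$ definition of $A_\Omega(w;X)$ to the honest limit along the positive real ray that requires, and uses in an essential way, the hyperconvexity of $\Omega$.
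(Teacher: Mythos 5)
Your proposal is correct and follows essentially the same route as the paper: rescale by $e^{-t}$, confine the rescaled sublevel sets in a fixed ball via $G_{\Omega,w}\geq\log(|z-w|/R)$, use the fact that for hyperconvex domains the $\varlimsup$ defining the Azukawa metric is a continuous genuine limit (the paper cites Zwonek's regularity result for this), and conclude by dominated convergence. Your explicit verification that $\lambda(\partial I^A_\Omega(w))=0$ is a detail the paper leaves implicit, but it is not a different argument.
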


It would be interesting to generalize this to a bigger class
of domains. Combining \eqref{est1} with Theorem \ref{thm1}
and approximating pseudoconvex domains by hyperconvex ones
from inside we obtain the following 
multidimensional version of the Suita conjecture:

\begin{thm}\label{thm2} 
For a pseudoconvex domain $\Omega$ in 
$\mathbb C^n$ and $w\in\Omega$ we have
\begin{equation} \label{msuita}
K_\Omega(w)\geq\frac 1{\lambda(I^A_\Omega(w))}.
\end{equation}
\end{thm}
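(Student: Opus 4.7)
The plan is to establish \eqref{msuita} first for bounded hyperconvex domains, where the tools from the excerpt apply directly, and then to extend it to arbitrary pseudoconvex domains by inner exhaustion.

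For the hyperconvex case, suppose $\Omega$ is bounded hyperconvex. The estimate \eqref{est1} gives, for every $t\leq 0$,
$$K_\Omega(w)\geq\frac{1}{e^{-2nt}\lambda(\{G_{\Omega,w}<t\})}.$$
Letting $t\to-\infty$ and applying Theorem \ref{thm1}, the denominator on the right-hand side converges to $\lambda(I^A_\Omega(w))$, which yields \eqref{msuita} in this case.

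For the general case, fix a pseudoconvex domain $\Omega$ and a point $w\in\Omega$. Take an inner exhaustion $\Omega_j\nearrow\Omega$ by bounded hyperconvex subdomains containing $w$; such an exhaustion exists for any pseudoconvex domain, e.g.\ via sublevel sets of a smooth strictly plurisubharmonic exhaustion function. Applying the hyperconvex case to each $\Omega_j$ gives $K_{\Omega_j}(w)\geq 1/\lambda(I^A_{\Omega_j}(w))$. Since $G_{\Omega,w}|_{\Omega_j}$ is a competitor in the supremum defining $G_{\Omega_j,w}$, we have $G_{\Omega_j,w}\geq G_{\Omega,w}$ on $\Omega_j$, and hence directly from the definition of the Azukawa indicatrix, $I^A_{\Omega_j}(w)\subset I^A_\Omega(w)$. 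This upgrades the bound to
$$K_{\Omega_j}(w)\geq\frac{1}{\lambda(I^A_\Omega(w))}.$$
Letting $j\to\infty$ and using the Ramadanov-type convergence $K_{\Omega_j}(w)\searrow K_\Omega(w)$ for increasing exhaustions yields \eqref{msuita} in full generality.

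The substantive step is therefore the hyperconvex case, which is handled by Theorem \ref{thm1}; the extension to pseudoconvex domains needs only the two classical facts that any pseudoconvex domain admits an inner exhaustion by bounded hyperconvex subdomains and that Bergman kernels are continuous from above under such exhaustions. Notably, no convergence of $\lambda(I^A_{\Omega_j}(w))$ to $\lambda(I^A_\Omega(w))$ is required, since the monotonicity of the indicatrix in the domain alone carries the inequality through to the limit.
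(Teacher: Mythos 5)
Your proof is correct and follows essentially the same route as the paper, which obtains Theorem \ref{thm2} precisely by combining \eqref{est1} with Theorem \ref{thm1} and approximating a pseudoconvex domain from inside by bounded hyperconvex ones. Your explicit use of the monotonicity $I^A_{\Omega_j}(w)\subset I^A_\Omega(w)$ together with Ramadanov's theorem is a clean way to carry out the approximation step the paper leaves to the reader.
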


Possible monotonicity of convergence in Theorem \ref{thm1}
is an interesting problem. We state the following:

\begin{conj}\label{conj1}
If $\Omega$ is pseudoconvex in $\mathbb C^n$ then 
the function 
  $$t\longmapsto e^{-2nt}\lambda(\{G_{\Omega,w}<t\})$$
is non-decreasing on $(-\infty,0]$.
\end{conj}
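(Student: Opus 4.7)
The plan is to reduce to the case where $\Omega$ is bounded hyperconvex. For a general pseudoconvex $\Omega$, exhaust by hyperconvex subdomains $\Omega_j \nearrow \Omega$; then $G_{\Omega_j, w} \searrow G_{\Omega, w}$, the volumes $\lambda(\{G_{\Omega_j, w} < t\})$ converge to $\lambda(\{G_{\Omega, w} < t\})$ for almost every $t$, and monotonicity in $t$ passes to the limit. On a hyperconvex $\Omega$ the function $G = G_{\Omega, w}$ is continuous, each sublevel set $\Omega^t := \{G < t\}$ is again hyperconvex, and one has $G_{\Omega^t, w} = G - t$ on $\Omega^t$ (both sides are negative PSH with the same logarithmic pole at $w$ that vanish on $\partial \Omega^t$; the two inequalities follow by extending the respective candidates plurisubharmonically to $\Omega$ via a max-gluing with $G_{\Omega,w}$).

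This identity gives $\lambda(\{G_{\Omega^t, w} < r\}) = \lambda(\{G < r + t\})$, and the substitution $r = s - t$ shows that the conjectured monotonicity on $(-\infty, 0]$ is equivalent to the single-parameter inequality
\[
\lambda(\{G_{\Omega, w} < s\}) \,\leq\, e^{2ns}\,\lambda(\Omega), \qquad s \leq 0,
\]
required to hold for every bounded hyperconvex $\Omega$ and $w \in \Omega$. Both endpoints are settled: $s = 0$ is trivial and $s \to -\infty$ recovers $\lambda(I^A_\Omega(w)) \leq \lambda(\Omega)$ via Theorem \ref{thm1}; equality for all $s$ is attained on complete circled domains centered at $w$, for instance balls.

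The first approach I would try for this reduced inequality is via Berndtsson--Maitani--Yamaguchi applied to the pseudoconvex total space $\{(\zeta, z) \in \mathbb{C} \times \Omega : G(z) < \Re \zeta\}$, which yields concavity of $t \mapsto \log K_{\Omega^t}(w)$ on $(-\infty, 0]$. Combining with the lower bound $\log K_{\Omega^t}(w) \geq -2nt - \log \lambda(I^A_\Omega(w))$ (from Theorem \ref{thm2} applied to $\Omega^t$ and the scaling $I^A_{\Omega^t}(w) = e^t I^A_\Omega(w)$), the elementary fact that a concave function on $(-\infty, 0]$ bounded below is non-increasing forces $t \mapsto e^{2nt} K_{\Omega^t}(w)$ itself to be non-increasing; in particular $K_{\Omega^t}(w) \geq e^{-2nt} K_\Omega(w)$ for all $t \leq 0$. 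The main obstacle is that this Bergman-kernel statement does not mechanically imply the volume version: the only elementary link is $K_{\Omega^t}(w) \geq 1/\lambda(\Omega^t)$, which runs in the wrong direction. A direct attack via the coarea formula would reduce the differential form of the conjectured inequality to $\int_{\{G = t\}} |\nabla G|^{-1}\,d\sigma \geq 2n\,\lambda(\{G < t\})$ (with equality on balls); this appears to require a Brunn--Minkowski-type mechanism for sublevel sets of pluricomplex Green functions that exploits $(dd^c G)^n = 0$ on $\Omega \setminus \{w\}$, and this is where the crux of the conjecture seems to lie.
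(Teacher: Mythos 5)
This statement is an open conjecture in the paper: the authors prove it only for $n=1$ (Theorem \ref{tn1}), via the coarea formula, Cauchy--Schwarz and the classical isoperimetric inequality, and they record that the general case is equivalent to a ``pluricomplex isoperimetric inequality'' --- which is, up to normalization of the constant, exactly the differential reformulation $\int_{\{G=t\}}|\nabla G|^{-1}\,d\sigma\geq 2n\,\lambda(\{G<t\})$ that you arrive at in your final sentence. So you were right not to claim a complete proof, and your structural work is sound and goes a bit beyond what the paper makes explicit: the identity $G_{\Omega^t,w}=G-t$ on $\Omega^t=\{G<t\}$ for bounded hyperconvex $\Omega$ is correct (the max-gluing argument works, and $\Omega^t$ is indeed connected and hyperconvex for $t<0$ by maximality of $G$ off the pole), the resulting equivalence of Conjecture \ref{conj1} with the single endpoint inequality $\lambda(\{G<s\})\leq e^{2ns}\lambda(\Omega)$ over all bounded hyperconvex domains is a clean observation, and the hyperconvex exhaustion step is unproblematic since $G_{\Omega_j,w}\searrow G_{\Omega,w}$ gives $\lambda(\{G_{\Omega_j,w}<t\})\nearrow\lambda(\{G_{\Omega,w}<t\})$ for every $t$. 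For $n=1$ your reduced inequality does follow, exactly as in the paper's proof of Theorem \ref{tn1}; for $n\geq 2$ the paper, like you, stops at the isoperimetric-type inequality.

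There is, however, one genuine error in your exploratory second paragraph. The Berndtsson/Maitani--Yamaguchi theorem applied to the pseudoconvex total space $\{(\zeta,z):G(z)<\Re\zeta\}$ gives that $\log K_{D_\zeta}(w)$ is plurisubharmonic in $\zeta$; since it depends only on $t=\Re\zeta$, it is \emph{convex} in $t$, not concave. With the correct convexity your logic reverses: $g(t)=\log K_{\Omega^t}(w)+2nt$ is convex, and since $B(w,e^{t-C})\subset\Omega^t\subset B(w,Re^t)$ for $t\ll 0$ it is bounded \emph{above} as $t\to-\infty$, so $g$ is non-decreasing and one gets $K_{\Omega^t}(w)\leq e^{-2nt}K_\Omega(w)$ --- the opposite of the inequality you assert, and your claimed corollary $K_{\Omega^t}(w)\geq e^{-2nt}K_\Omega(w)$ fails whenever $e^{2nt}K_{\Omega^t}(w)$ is non-constant (e.g.\ whenever $F_\Omega(w)>1$). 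You had already diagnosed, correctly, that this Bergman-kernel route does not feed back into the volume statement because $K_{\Omega^t}(w)\geq 1/\lambda(\Omega^t)$ runs the wrong way, so the error does not contaminate your reductions; but the concavity claim and its consequence should be struck or reversed. The actual crux --- proving $\int_{\{G=t\}}|\nabla G|^{-1}\,d\sigma\geq 2n\,\lambda(\{G<t\})$ for $n\geq 2$, where the one-variable isoperimetric mechanism is unavailable --- remains open both in your note and in the paper.
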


We will show the following result:

\begin{thm} \label{tn1}
Conjecture \ref{conj1} is true for $n=1$.
\end{thm}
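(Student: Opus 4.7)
Let $u:=G_{\Omega,w}$ and $V(t):=\lambda(\{u<t\})$ for $t\le 0$. The plan is to prove the pointwise differential inequality
$$V'(t)\ge 2\,V(t)\qquad\text{for a.e. } t\le 0,$$
whence $(e^{-2t}V(t))'=e^{-2t}\bigl(V'(t)-2V(t)\bigr)\ge 0$ almost everywhere and, by absolute continuity of $V$, the function $e^{-2t}V(t)$ is non-decreasing. The degenerate case $u\equiv -\infty$ (when $\mathbb C\setminus\Omega$ is polar) forces $\lambda(\Omega)=\infty$ and $e^{-2t}V(t)\equiv+\infty$, so we may assume $u$ is a genuine Green function: harmonic on $\Omega\setminus\{w\}$ with $u(z)=\log|z-w|+O(1)$ near $w$. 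Exhausting $\Omega$ from inside by smoothly bounded $\Omega_j\Subset\Omega$ containing $w$ gives $G_{\Omega_j,w}\searrow u$ and hence $V_j(t)\nearrow V(t)$; since monotonicity of $e^{-2t}V_j(t)$ passes to the pointwise limit, we further reduce to the case where $\Omega$ is smoothly bounded and $V(t)<\infty$ throughout.

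Three standard ingredients then combine. First, the coarea formula gives
$$V'(t)=\int_{\{u=t\}}\frac{d\mathcal{H}^1}{|\nabla u|}\qquad\text{for a.e. } t\le 0,$$
which in particular implies $V$ is absolutely continuous. Second, since $\Delta u=2\pi\delta_w$ on $\Omega$ and $\nabla u$ points in the outward normal direction to $\{u<t\}$ along its smooth boundary, Stokes' theorem applied on $\{u<t\}\setminus\overline{B(w,\ep)}$ followed by $\ep\to 0$ yields
$$\int_{\{u=t\}}|\nabla u|\,d\mathcal{H}^1=2\pi$$
for every $t$ at which the level set is a smooth curve, hence (by Sard) for a.e. $t\le 0$. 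Third, the classical planar isoperimetric inequality applied to $\{u<t\}$ gives $\mathcal{H}^1(\{u=t\})^2\ge 4\pi\,V(t)$.

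A single application of Cauchy--Schwarz now bridges the two boundary integrals:
$$4\pi\,V(t)\le \mathcal{H}^1(\{u=t\})^2\le\left(\int_{\{u=t\}}|\nabla u|\,d\mathcal{H}^1\right)\!\left(\int_{\{u=t\}}\frac{d\mathcal{H}^1}{|\nabla u|}\right)=2\pi\,V'(t),$$
which is the desired inequality. The main obstacle is regularity: the flux identity and the isoperimetric inequality both require $\{u<t\}$ to have a smooth (or at least rectifiable) boundary, and this is precisely what forces the passage to almost every $t$ via Sard's theorem and to inner approximation of $\Omega$. Once that regularization is in place, the rest of the argument is routine.
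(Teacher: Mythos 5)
Your proof is correct and follows essentially the same route as the paper's: the coarea formula for $V'(t)$, the Cauchy--Schwarz inequality linking $\int_{\{u=t\}}|\nabla u|\,d\mathcal{H}^1=2\pi$ with $\int_{\{u=t\}}d\mathcal{H}^1/|\nabla u|$, and the planar isoperimetric inequality. You are somewhat more explicit than the paper about the reductions (the polar-complement case, exhaustion by smooth domains, absolute continuity of $V$), but the core argument is identical.
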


The main tool will be the isoperimetric inequality. In fact,
the proof of Theorem \ref{tn1} will show that Conjecture 
\ref{conj1} in arbitrary dimension is equivalent to the following 
{\it pluricomplex isoperimetric inequality}: 
  $$\int_{\partial\Omega}\frac{d\sigma}{|\nabla G_{\Omega,w}|}
    \geq 4n\pi\lambda(\Omega)$$
for bounded strongly pseudoconvex $\Omega$ with smooth
boundary (by \cite{B1} the left-hand side is then well defined).

The following conjecture would easily give an affirmative
answer to Conjecture \ref{conj1}:

\begin{conj}\label{conj2}
If $\Omega$ is pseudoconvex in $\mathbb C^n$ then 
the function 
  $$t\longmapsto\log\lambda(\{G_{\Omega,w}<t\})$$
is convex on $(-\infty,0]$.
\end{conj}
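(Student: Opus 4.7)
The plan is to prove the conjecture in dimension one (simply connected case) by an explicit series expansion, and to sketch an approach for higher dimensions, together with the main obstacle.

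\smallskip
For $n=1$ and $\Omega\subset\mathbb C$ simply connected, let $\phi:\Omega\to\mathbb D$ be the Riemann map with $\phi(w)=0$, so that $G_{\Omega,w}=\log|\phi|$ and $\{G_{\Omega,w}<t\}=\phi^{-1}(\{|\zeta|<e^t\})$. Expanding $(\phi^{-1})'(\zeta)=\sum_{k\ge 0}a_k\zeta^k$ and integrating in polar coordinates gives
\begin{equation*}
V(t):=\lambda(\{G_{\Omega,w}<t\})=\pi\sum_{k\ge 0}\frac{|a_k|^2}{k+1}\,e^{(2k+2)t}.
\end{equation*}
Thus $V(t)=\sum_k A_k e^{c_k t}$ with $A_k\ge 0$ and $c_k>0$, and log-convexity follows at once from the Cauchy--Schwarz inequality
\begin{equation*}
(V'(t))^2=\Bigl(\sum_k c_k A_k e^{c_k t}\Bigr)^{\!2}\le\Bigl(\sum_k c_k^2 A_k e^{c_k t}\Bigr)\Bigl(\sum_k A_k e^{c_k t}\Bigr)=V''(t)\,V(t).
\end{equation*}
This already sharpens Theorem \ref{tn1} in the simply connected planar case.

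\smallskip
In arbitrary dimension the conjecture is equivalent to the plurisubharmonicity on $\mathbb C^*$ of $\zeta\mapsto\log\lambda(D_\zeta)$, where
\begin{equation*}
D:=\{(z,\zeta)\in\Omega\times\mathbb C^*:G_{\Omega,w}(z)<\log|\zeta|\}
\end{equation*}
is pseudoconvex (its defining function $G_{\Omega,w}(z)-\log|\zeta|$ is psh, since $-\log|\zeta|$ is harmonic on $\mathbb C^*$), with fibers $D_\zeta=\{G_{\Omega,w}<\log|\zeta|\}$. The natural route combines a Berndtsson-type variation theorem with Theorem \ref{thm2}: using $G_{\Omega_t,w}=G_{\Omega,w}-t$ on $\Omega_t:=\{G_{\Omega,w}<t\}$ (valid in the hyperconvex setting) one finds $I^A_{\Omega_t}(w)=e^t I^A_\Omega(w)$, so Theorem \ref{thm2} applied to the fibers gives $K_{D_\zeta}(w)\,\lambda(D_\zeta)\ge|\zeta|^{-2n}\lambda(D_\zeta)/\lambda(I^A_\Omega(w))$, which tends to $1$ as $\zeta\to 0$ by Theorem \ref{thm1}; while Berndtsson's theorem supplies plurisuperharmonicity of $\log K_{D_\zeta}(w)$ in $\zeta$.

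\smallskip
The main obstacle is the opposite-sign issue: standard Berndtsson/Maitani--Yamaguchi-type theorems conclude that the $-\log$ of a fiberwise Bergman or volume quantity is psh, whereas Conjecture \ref{conj2} requires $\log\lambda(D_\zeta)$ itself (without the minus sign) to be psh. Closing this gap seems to require either a new plurisubharmonic-variation principle tailored to the Green-function geometry, or a Monge--Amp\`ere identity expressing $(\log V)''$ as an integral of a manifestly nonnegative pluricomplex quantity on the level set $\{G_{\Omega,w}=t\}$---in effect a sharp second-order pluricomplex isoperimetric inequality strengthening the one pointed out in the excerpt.
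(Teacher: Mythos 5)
This statement is Conjecture \ref{conj2}; the paper gives no proof of it and explicitly says ``we do not know if it is true even for $n=1$,'' so there is no argument of the authors to compare yours against. Your computation in the simply connected planar case is correct and is a genuine partial result going beyond what the paper records: with $\phi:\Omega\to\Delta$ the Riemann map sending $w$ to $0$ one indeed has $G_{\Omega,w}=\log|\phi|$, the change of variables gives $V(t)=\pi\sum_k|a_k|^2e^{(2k+2)t}/(k+1)$, term-by-term differentiation is justified for $t<0$ by locally uniform convergence, and the Cauchy--Schwarz step yields $(V')^2\le V\,V''$, i.e.\ log-convexity; since every exponent $c_k=2k+2$ is at least $2$ this also recovers Theorem \ref{tn1} for such domains.

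As a proof of the statement as written, however, there is a genuine gap: the conjecture is asserted for all pseudoconvex $\Omega\subset\mathbb C^n$, and your argument covers only simply connected $\Omega\subset\mathbb C$. Already for multiply connected planar domains (e.g.\ the annulus $P_r$ of Proposition \ref{ann}) the Green function is not $\log|\phi|$ for a single-valued conformal map onto $\Delta$, the sublevel sets $\{G_{\Omega,w}<t\}$ need not be conformal images of disks, and no analogue of your exponential series is available; this case is exactly the part of $n=1$ the authors say they cannot handle. The higher-dimensional discussion is an equivalent reformulation (psh-ness of the radial function $\zeta\mapsto\log\lambda(\{G_{\Omega,w}<\log|\zeta|\})$) followed by a correct but inconclusive observation that Berndtsson/Maitani--Yamaguchi-type variation theorems produce plurisubharmonicity of $\log K_{D_\zeta}(w)$, which is the wrong sign for what is needed; no proof is obtained there. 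So the conjecture remains open except in the simply connected planar subcase, which you should present as a partial result rather than a proof.
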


Unfortunately, we do not know if it is true even for $n=1$.

In \cite{B2} the question was raised whether for $n=1$ a reverse
inequality to \eqref{suita}
  $$K_\Omega\leq Cc_\Omega^2$$
holds for some constant $C$. We answer it here in the negative:

\begin{prop}\label{ann} 
Assume that $0<r<1$ and let $P_r=\{z\in\mathbb C:r<|z|<1\}$.
Then
\begin{equation}\label{ann1}
\frac{K_\Omega(\sqrt r)}{(c_\Omega(\sqrt r))^2}
      \geq\frac{-2\log r}{\pi^3}.
\end{equation}
\end{prop}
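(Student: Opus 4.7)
The plan is to estimate $K_{P_r}(\sqrt r)$ from below by a concrete test function and $c_{P_r}(\sqrt r)$ from above by lifting to the universal cover; the numerics are arranged so that the two bounds combine to give exactly $-2\log r/\pi^3$.

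For the kernel I would use $f(z) = 1/z \in \mathcal{O}(P_r) \cap L^2(P_r)$: a direct calculation in polar coordinates gives $\|f\|^2 = 2\pi\int_r^1\rho^{-1}\,d\rho = -2\pi\log r$ and $|f(\sqrt r)|^2 = 1/r$, so the extremal property of the Bergman kernel yields
$$K_{P_r}(\sqrt r)\geq\frac{1}{-2\pi r\log r}.$$

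For the capacity --- the main step --- I would realize $P_r$ as the quotient of the strip $S = \{0 < \text{\rm Im}\,\zeta < -\log r\}$ by $\zeta\mapsto\zeta+2\pi$ via the covering map $\phi(\zeta) = e^{i\zeta}$, so that the fibre over $\sqrt r$ is $\{\zeta_0+2\pi n\}_{n\in\mathbb Z}$ with $\zeta_0 = -(i/2)\log r$. The conformal map $\psi(\zeta) = \exp(\pi\zeta/(-\log r))$ sends $(S,\zeta_0)$ biholomorphically to the upper half plane with pole at $i$, so the strip Green function is $G_S(\zeta,\zeta_0) = \log|(\psi(\zeta)-i)/(\psi(\zeta)+i)|$; using $|\psi'(\zeta_0)| = \pi/(-\log r)$ and $|\psi(\zeta_0)+i| = 2$, the Robin constant of $G_S$ at $\zeta_0$ equals $\log(\pi/(-2\log r))$. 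By the method of images the lift of $G_{P_r}(\cdot,\sqrt r)$ to $S$ equals the absolutely convergent sum $\tilde G(\zeta) = \sum_{n\in\mathbb Z}G_S(\zeta,\zeta_0+2\pi n)$; subtracting $\log|\zeta-\zeta_0|$ and letting $\zeta\to\zeta_0$ shows that the Robin constant of $\tilde G$ at $\zeta_0$ equals
$$\log\frac{\pi}{-2\log r} + \sum_{n\neq 0}G_S(\zeta_0,\zeta_0+2\pi n).$$
Each tail term is $\leq 0$ since $G_S\leq 0$, so combined with the change-of-variable identity (Robin constant of $\tilde G$ at $\zeta_0$) $= \log c_{P_r}(\sqrt r) + \log|\phi'(\zeta_0)|$ and $|\phi'(\zeta_0)| = \sqrt r$, one obtains
$$c_{P_r}(\sqrt r)\leq\frac{\pi}{-2\sqrt r\log r}.$$

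Squaring and dividing the two bounds produces
$$\frac{K_{P_r}(\sqrt r)}{c_{P_r}(\sqrt r)^2}\geq\frac{1/(-2\pi r\log r)}{\pi^2/(4r\log^2 r)} = \frac{-2\log r}{\pi^3},$$
as required. The main obstacle is justifying the method-of-images representation of the lift, which comes from uniqueness of the Green function via a maximum-principle argument on a fundamental domain; the structural reason the clean constant $\pi^3$ appears is that the discarded tail terms are non-positive (as values of $G_S$), so the trivial bound on them already meshes sharply with the $1/z$ test function on the Bergman side.
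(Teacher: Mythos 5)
Your proposal is correct, and both halves land on exactly the same intermediate bounds as the paper: $K_{P_r}(\sqrt r)\geq 1/(-2\pi r\log r)$ and $c_{P_r}(\sqrt r)\leq \pi/(-2\sqrt r\log r)$. On the Bergman side your argument is essentially identical to the paper's: the paper expands the kernel in the orthogonal system $z^j$, $j\in\mathbb Z$, and the displayed lower bound is precisely the contribution of the $j=-1$ term, i.e.\ your test function $1/z$. On the capacity side you take a genuinely different and noticeably heavier route. The paper simply exhibits the holomorphic map $p:\Delta\to P_r$, $p(\zeta)=\exp\bigl(\tfrac{\log r}{\pi i}\Log\bigl(i\tfrac{1+\zeta}{1-\zeta}\bigr)\bigr)$ (the universal cover, though this is never needed), uses the contraction property $G_{P_r}(p(\zeta),\sqrt r)\leq\log|\zeta|$, and reads off $c_{P_r}(\sqrt r)\leq 1/|p'(0)|$ in one line. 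You instead pass to the $\mathbb Z$-cover by the strip $S$, represent the lifted Green function as a method-of-images sum of strip Green functions, and discard the non-positive tail $\sum_{n\neq 0}G_S(\zeta_0,\zeta_0+2\pi n)$. Your computation is correct (the Robin constant $\log\frac{\pi}{-2\log r}$ of $G_S$ at $\zeta_0$, the factor $|\phi'(\zeta_0)|=\sqrt r$, and the final division all check out), and it has the mild virtue of displaying exactly what is thrown away, namely the explicit negative series quantifying the gap between annulus and strip capacities. But the convergence, boundary behaviour, and uniqueness issues you flag as ``the main obstacle'' are entirely avoidable: the single inequality $G_{P_r}(\phi(\zeta),\sqrt r)\leq G_S(\zeta,\zeta_0)$, which is just the Schwarz-type monotonicity of the Green function under the holomorphic map $\phi$, already gives the same Robin-constant comparison without any images. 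That one-line shortcut is what the paper uses (with $\Delta$ in place of $S$, which changes nothing since the covering maps have the same derivative at the base point).
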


It is nevertheless still plausible that there is an
upper bound for the Bergman kernel in terms of logarithmic
capacity which would give a quantitative version 
of the well known fact that for domains in $\mathbb C$
whose complement is a polar set the Bergman kernel vanishes.
The opposite implication is also well known and the 
quantitative version of this is given by \eqref{suita}.

There is however a class of domains for which the upper
bound does hold:

\begin{thm}\label{ub} For a $\mathbb C$-convex domain $\Omega$ 
in $\mathbb C^n$ and $w\in\Omega$ one has
  $$K_\Omega(w)\leq\frac{C^n}{\lambda(I^A_\Omega(w))}$$
with $C=16$. If $\Omega$ is convex then the estimate holds with
$C=4$ and if it is in addition symmetric with respect to $w$
then we can take $C=16/\pi^2$.
\end{thm}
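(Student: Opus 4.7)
The strategy is to construct a balanced pseudoconvex subdomain $D\subset\Omega$ containing $w$ with $\lambda(D)\ge C_0^{-2n}\lambda(I^A_\Omega(w))$ and to invoke the standard identity $K_D(w)=1/\lambda(D)$ for such $D$. This identity is a direct consequence of the orthogonal decomposition of $A^2(D)$ by $k$-homogeneous polynomials centred at $w$: only the constants fail to vanish at $w$, so $K_D(w)=1/\|1\|_{L^2(D)}^2=1/\lambda(D)$. After translating so $w=0$ and setting $C=C_0^2$ with $C_0\in\{4,2,4/\pi\}$ in the $\mathbb C$-convex, convex, and convex-symmetric cases respectively, the whole theorem reduces to proving
\[
\frac{1}{C_0}I^A_\Omega(0)\subset\Omega.
\]
The set $D:=\tfrac{1}{C_0}I^A_\Omega(0)$ is then balanced pseudoconvex (its Minkowski functional $C_0\,\gamma^A_\Omega(0;\cdot)$ is logarithmically plurisubharmonic), $\lambda(D)=C_0^{-2n}\lambda(I^A_\Omega(0))$, and monotonicity $K_\Omega(0)\le K_D(0)$ gives the claimed bound.

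To prove the inclusion, combine the universal inequality $\gamma^C_\Omega\le\gamma^A_\Omega$ (obtained from $\log|f|\le G_{\Omega,0}$ for every holomorphic $f\colon\Omega\to\mathbb D$ vanishing at $0$, since such $\log|f|$ is a competitor in the Green-function supremum) with the pointwise estimate
\[
\gamma^C_\Omega(0;X)\ge\frac{1}{C_0\,r_+(X)},\qquad r_+(X):=\sup\{t>0:tX\in\Omega\}.
\]
Given both, any $X\in I^A_\Omega(0)$ satisfies $\gamma^C_\Omega(0;X)<1$, hence $r_+(X)>1/C_0$, and so $X/C_0\in\Omega$. The required Carathéodory lower bound is produced by exhibiting, for each $X$, an explicit holomorphic $f\colon\Omega\to\mathbb D$ with $f(0)=0$ and $|df(0)X|\ge 1/(C_0\,r_+(X))$, starting from a suitable supporting object of $\Omega$ at the boundary point $z_0=r_+(X)X$.

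In the $\mathbb C$-convex case, $\mathbb C$-convexity provides a complex affine hyperplane $\{L=L(z_0)\}$ disjoint from $\Omega$; since $\mathbb C$-convexity is preserved under complex-linear projections and in one complex dimension reduces to simple connectivity, $L(\Omega)\subset\mathbb C$ is simply connected and misses $L(z_0)$. Applying the classical Koebe $\tfrac14$-theorem to the Riemann map $\psi\colon L(\Omega)\to\mathbb D$ with $\psi(L(0))=0$ gives $r_+(X)|L(X)|=|L(z_0)-L(0)|\ge 1/(4|\psi'(L(0))|)$, so $f=\psi\circ L$ satisfies $|df(0)X|=|\psi'(L(0))L(X)|\ge 1/(4r_+(X))$. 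In the convex case, Hahn--Banach provides a real supporting half-space $\{\Re L(z-z_0)<0\}\supset\Omega$ for some $\mathbb C$-linear $L$; the Möbius map $u\mapsto (u+a)/(u-a)$ with $a=\Re L(z_0)>0$ takes the left half-plane conformally onto $\mathbb D$ sending $-a\mapsto 0$, and composition yields $|df(0)X|=|L(X)|/(2a)\ge 1/(2r_+(X))$, using $|L(X)|\ge\Re L(X)$. In the convex symmetric case, $-z_0\in\partial\Omega$ as well, a second supporting hyperplane traps $\Omega$ in the strip $\{|\Re L|<a\}$, and the strip-to-disk map $u\mapsto (e^{i\pi u/(2a)}-1)/(e^{i\pi u/(2a)}+1)$ sharpens the bound to $|df(0)X|\ge\pi/(4r_+(X))$.

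The main technical input, and the only place where the non-trivial geometry enters, is the fact that complex-linear images of $\mathbb C$-convex domains are $\mathbb C$-convex (hence, in one dimension, simply connected); granting this, each of the three bounds is a direct one-variable Koebe- or Schwarz-type computation.
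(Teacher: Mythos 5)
Your proof is essentially the paper's: both reduce the theorem to the inclusion $I^A_\Omega(0)\subset\sqrt C\,\Omega$ via monotonicity of the Bergman kernel and the identity $K_D(0)=1/\lambda(D)$ for a balanced domain $D$, and both prove the inclusion by projecting $\Omega$ along a complex hyperplane that passes through a boundary point on the line $\mathbb C X$ and misses $\Omega$, then applying the Koebe quarter theorem (resp.\ the half-plane and strip estimates) to the resulting simply connected plane domain. Your detour through the Carath\'eodory metric and the inequality $\gamma^C_\Omega\le\gamma^A_\Omega$ is only a repackaging: your extremal function $\psi\circ L$ is exactly $\exp(G_{D,0}\circ\pi)$ for the projected domain $D$, which is the competitor the paper inserts directly into the $\varlimsup$ defining the Azukawa indicatrix. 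One point needs repair: you set $r_+(X)=\sup\{t>0:tX\in\Omega\}$ and deduce $X/C_0\in\Omega$ from $r_+(X)>1/C_0$; for a $\mathbb C$-convex domain that is not convex the set $\{t>0:tX\in\Omega\}$ need not be an interval, so this implication can fail as written. Replace $r_+(X)$ by the first exit time $\inf\{t>0:tX\notin\Omega\}$ (or, as the paper does, by $|a|/|X|$ where $a$ is the point of $\partial\Omega\cap\mathbb C X$ nearest the origin, so that the whole disk of that radius lies in $\Omega$); the supporting hyperplane and the Koebe/half-plane/strip estimates apply verbatim at that boundary point, and the conclusion $X/C_0\in\Omega$ then follows correctly.
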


By Theorems \ref{thm2} and \ref{ub} for $\mathbb C$-convex 
domains the function
  $$F_\Omega(w):=\big(K_\Omega(w)\lambda(I^A_\Omega(w))\big)^{1/n}$$
defined for $w\in\Omega$ with $K_\Omega(w)>0$, satisfies
\begin{equation}\label{i4}
1\leq F_\Omega\leq 16.
\end{equation}
One can easily check that $F_\Omega$ is biholomorphically invariant.  
If $\Omega$ is pseudoconvex and balanced with respect to $w$
(that is $w+z\in\Omega$ implies $w+\zeta z\in\Omega$ for 
$\zeta\in\bar\Delta$, where $\Delta$ is the unit disk)
then $F_\Omega(w)=1$. In fact a symmetrized bidisk
  $$\mathbb G_2=\{(\zeta_1+\zeta_2,\zeta_1\zeta_2):
     \zeta_1,\zeta_2\in\Delta\},$$
is an example of a $\mathbb C$-convex domain (see \cite
{NPZ1}) with $F_\Omega\not\equiv 1$. By \cite{EZ} we have 
$K_{\mathbb G_2}(0)=2/\pi^2$ and by \cite{AY} 
  $$I_{\mathbb G_2}^A(0)=\{X\in\mathbb C^2:
     |X_1|+2|X_2|<2\}.$$
Therefore $\lambda(I_{\mathbb G_2}^A(0))=2\pi^2/3$
and $F_{\mathbb G_2}(0)=2/\sqrt 3=1.15470\dots$

Especially interesting is the class of convex domains. It is well 
known that then the closure of the Azukawa indicatrix is equal to the 
Kobayashi indicatrix
  $$I^K_\Omega(w)=\{\varphi'(0):\varphi\in
      \mathcal O(\Delta,\Omega),\ \varphi(0)=w\}.$$
This follows from
Lempert's results \cite{L}, see \cite{JP}. For such domains 
the inequality $F_\Omega\geq 1$ was proved in \cite{B5} and 
seems very accurate. It is in fact much more difficult than for 
$\mathbb C$-convex domains to compute an example where one does 
not have equality. This can be done for some convex complex 
ellipsoids:

\begin{thm} \label{ell}
For $n\geq 2$ and $m\geq 1/2$ define
\begin{equation}\label{ell1}
\Omega=\{z\in\mathbb C^n:|z_1|+|z_2|^{2m}+\dots+|z_n|^{2m}<1\}.
\end{equation}
Then for $w=(b,0,\dots,0)$, where $0<b<1$, one has
\begin{equation}\label{form}
  K_\Omega(w)\lambda(I^K_\Omega(w))
     =1+(1-b)^a\frac{(1+b)^a-(1-b)^a-2ab}
     {2ab(1+b)^a},
\end{equation}
where $a=(n-1)/m+2$.
\end{thm}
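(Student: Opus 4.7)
The plan is to compute $K_\Omega(w)$ and $\lambda(I^K_\Omega(w))$ at $w=(b,0,\dots,0)$ by two separate direct calculations and then verify algebraically that their product equals the right-hand side of \eqref{form}.

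For the Bergman kernel, $\Omega$ is a complete Reinhardt domain, so the monomials $\{z^\alpha\}$ form a complete orthogonal system in $L^2_h(\Omega)$ and $K_\Omega(w)=\sum_\alpha|w^\alpha|^2/\|z^\alpha\|^2$. The squared norms $\|z^\alpha\|^2$ are computed by polar coordinates and the substitution $u_1=r_1,\ u_j=r_j^{2m}$ for $j\geq 2$, which reduces them to a standard Dirichlet integral on the simplex. Because $w$ has only its first coordinate nonzero, only $\alpha=(k,0,\dots,0)$ contributes, and the sum collapses to a single series
$$K_\Omega(w)=\frac{1}{C}\sum_{k=0}^\infty b^{2k}\,\frac{\Gamma(2k+a+1)}{\Gamma(2k+2)},\qquad C=\frac{(2\pi)^n\Gamma(1/m)^{n-1}}{(2m)^{n-1}}.$$
Two applications of the Legendre duplication formula recognize this as $\Gamma(a+1)\cdot{}_2F_1\bigl(\tfrac{a+1}{2},\tfrac{a+2}{2};\tfrac32;b^2\bigr)/C$, and the classical quadratic identity
$${}_2F_1\bigl(\alpha,\alpha+\tfrac12;\tfrac32;z^2\bigr)=\frac{(1-z)^{1-2\alpha}-(1+z)^{1-2\alpha}}{2(1-2\alpha)\,z}$$
sums it in closed form to $K_\Omega(w)=\bigl(\Gamma(a+1)/C\bigr)\cdot\bigl((1+b)^a-(1-b)^a\bigr)/\bigl(2ab(1-b^2)^a\bigr)$.

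For the volume of the indicatrix I would use convexity of $\Omega$ and Lempert's theorem to identify $I^K_\Omega(w)$ with the closure of $I^A_\Omega(w)$, and to describe the set explicitly at the axis point $w=(b,0,\dots,0)$ I would appeal to the classification of Lempert extremal disks through an axis point of a convex complex ellipsoid (Jarnicki-Pflug, Pflug-Zwonek, Nikolov-Pflug-Zwonek). This classification pins down which vectors $\varphi'(0)$ arise from $\varphi\in\mathcal O(\Delta,\Omega)$ with $\varphi(0)=w$, presenting $I^A_\Omega(w)$ as a concrete semialgebraic set. Integrating over that set by an analogous polar/Dirichlet substitution, one expects the outcome to take the form
$$\lambda(I^K_\Omega(w))=\frac{C}{\Gamma(a+1)}\,(1-b)^a\bigl[(1-b)^a+2ab\bigr],$$
with exactly the same constant $C$ as in the Bergman-kernel formula.

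Multiplying the two expressions, $C$ and $\Gamma(a+1)$ cancel; writing $(1-b^2)^a=(1-b)^a(1+b)^a$ and collecting terms gives
$$K_\Omega(w)\,\lambda(I^K_\Omega(w))=\frac{\bigl[(1+b)^a-(1-b)^a\bigr]\,\bigl[(1-b)^a+2ab\bigr]}{2ab\,(1+b)^a},$$
which rearranges at once into the right-hand side of \eqref{form}. The main obstacle is the indicatrix step: at the center $w=0$ the domain is balanced and $I^A_\Omega(0)=\Omega$, so its volume matches $\lambda(\Omega)$ for free, but for $b>0$ the set $I^A_\Omega(w)$ is no longer balanced, and identifying its precise shape requires extracting the initial data $\varphi'(0)$ of the family of complex geodesics through $w$ and checking that the resulting volume really does take the clean form above. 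By comparison the Bergman kernel part, while technical, is a routine hypergeometric computation.
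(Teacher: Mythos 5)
Your Bergman-kernel computation is correct and is a genuinely different route from the paper's: the paper deflates $\Omega$ to the two-dimensional ellipsoid $\mathcal E(1/2,m/(n-1))$ via \cite{BFS} and quotes the known closed formula for $K_{\mathcal E(1/2,1/p)}((b,0))$ from \cite{JP}, whereas you expand in monomials on the Reinhardt domain and sum the resulting series $\frac1C\sum_k b^{2k}\Gamma(2k+a+1)/\Gamma(2k+2)$ by a quadratic hypergeometric identity. I checked that your series, your constant $C$, and your closed form all agree with the paper's expression $\frac{a-1}{4\pi\omega b}\big((1-b)^{-a}-(1+b)^{-a}\big)$, and your final algebraic rearrangement of the product into the right-hand side of \eqref{form} is also correct. (Minor slip: as you wrote the quadratic identity, the roles of $1-z$ and $1+z$ are swapped, so the right-hand side is negative for $\alpha>1/2$; the version you actually used to get the final positive formula is the correct one.)

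The genuine gap is exactly where you flag it: the volume of the indicatrix. You state that the classification of extremal disks through $(b,0,\dots,0)$ presents $I^K_\Omega(w)$ as ``a concrete semialgebraic set'' and that ``one expects'' its volume to be $\frac{C}{\Gamma(a+1)}(1-b)^a\big[(1-b)^a+2ab\big]$ --- but that expected value is reverse-engineered from the target identity \eqref{form}, not derived. This is the core of the theorem and the paper spends most of Section \ref{s3} on it: starting from the Jarnicki--Pflug--Zeinstra parametrization \eqref{geod} of geodesics in $\mathcal E(p)$, one must split into the two cases $1\in A$ and $1\notin A$ (according to whether the first component of the geodesic vanishes in $\Delta$), solve for $|\alpha_1|$ in terms of $|X_1|$ using $p_1=1/2$, and substitute into the constraint \eqref{aal}. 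The outcome (Theorem \ref{ell8}) is that $I^K_\Omega(w)$ is the set $|X_2|^{2m}+\dots+|X_n|^{2m}\leq\gamma(|X_1|)$ with a \emph{piecewise} profile $\gamma$, quadratic in $r$ for $r\leq 2b(1-b)$ and affine for $r>2b(1-b)$; the integral $2\pi\omega\int_0^{1-b^2}r\,\gamma(r)^{(n-1)/m}\,dr$ then has to be evaluated across both regimes to produce $(1-b)^a\big[(1-b)^a+2ab\big]/(a(a-1))$. None of this shape information is guessable from general principles, and without it your proof does not close. Your identification of $I^K$ with $\overline{I^A}$ via Lempert's theorem and your intended Dirichlet-type integration are the right framework, but the explicit determination of $\gamma$ is the missing step.
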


For example, Theorem \ref{ell} gives the following graphs of 
$F_\Omega(b,0,\dots,0)$ for $m=1/2$ and $2\leq n\leq 6$
\footnote{Figures were done using {\it Mathematica}.}:

\vskip 10pt

\centerline{\includegraphics[scale=1.1]{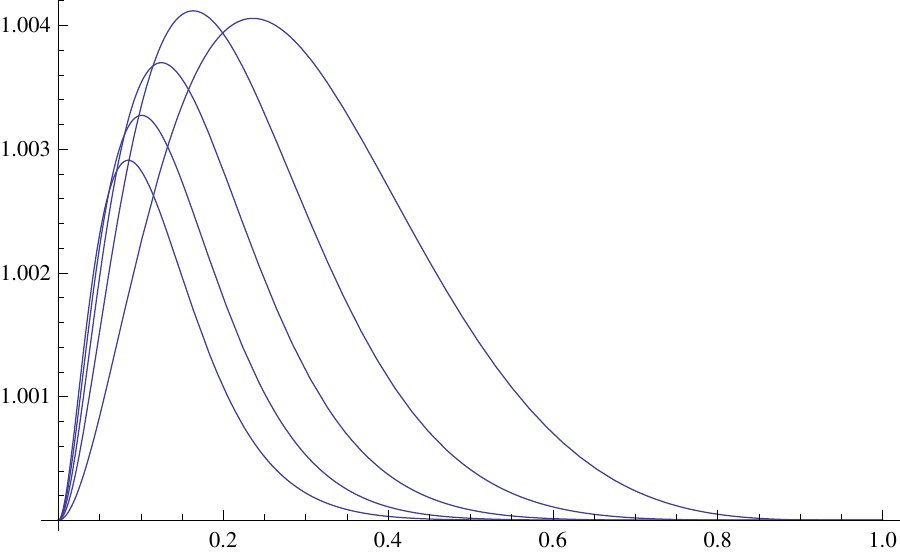}}

\noindent 
One can check numerically that the highest value 
of $F_\Omega(b,0,\dots,0)$ is attained for
$m=1/2$, $n=3$ at $b=0.163501\dots$, and is equal to 
$1.004178\dots$ 

Using \cite{6a} one can compute numerically 
$F_\Omega(b,0)$ for the ellipsoid
  $$\Omega=\{z\in\mathbb C^2:|z_1|^{2m}+|z_2|^2<1\},$$
where $m\geq 1/2$. This has an advantage compared to
the ellipsoid given by \eqref{ell1} because using 
holomorphic automorphisms we can easily show that 
all values of $F_\Omega$ are attained at $(b,0)$,
where $0<b<1$.
Here is the graph of $F_\Omega(b,0)$ for $m$ equal 
to 1/2, 2, 8, 32, and 128:

\vskip 10pt

\centerline{\includegraphics[scale=1.1]{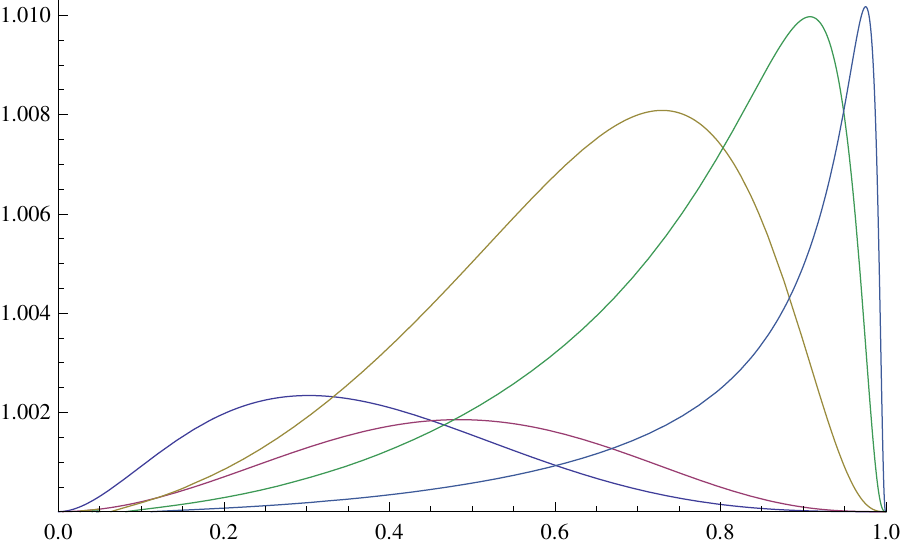}}

\noindent 
One can compute that the maximum converges to 
$1.010182\dots$ as $m\to\infty$. This is the highest
value of $F_\Omega$ for convex $\Omega$ we have been able
to obtain so far. It would be interesting to find
an optimal upper bound for $F_\Omega$ when $\Omega$ is
convex, how close to 1 it really is. We suspect that it 
is attained  for the ellipsoid
  $$\{z\in\mathbb C^n:|z_1|+\dots+|z_n|<1\}$$
at a point of the form $w=(b,\dots,b)$.

\begin{conj} Let $\Omega$ be convex and $w\in\Omega$ 
be such that $K_\Omega(w)>0$. Then $F_\Omega(w)=1$ if and only if 
there exists a balanced domain $\Omega'$ (not necessarily
convex) and a biholomorphic mapping 
$H:\Omega\rightarrow\Omega'$ such that $H(w)=0$.
\end{conj}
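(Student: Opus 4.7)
The if-direction follows directly from two facts stated in the introduction. First, $F_\Omega$ is a biholomorphic invariant. Second, if $D$ is a pseudoconvex domain balanced with respect to a point $p$, then $F_D(p)=1$. Indeed, if $H\colon\Omega\to\Omega'$ is biholomorphic, $H(w)=0$, and $\Omega'$ is balanced with respect to $0$, then $\Omega'$ is pseudoconvex (being biholomorphic to the pseudoconvex domain $\Omega$), so $F_{\Omega'}(0)=1$, and biholomorphic invariance then gives $F_\Omega(w)=1$.

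The only-if direction is the substantive content. My approach is to try to construct the desired biholomorphism using Lempert theory. Since $\Omega$ is convex, the Azukawa indicatrix $D:=I^A_\Omega(w)$ coincides with the interior of the Kobayashi indicatrix and is itself convex (in particular balanced). The Kobayashi extremals $\varphi_X\colon\Delta\to\Omega$ with $\varphi_X(0)=w$ exist, are essentially unique, and sweep out $\Omega$; this yields a natural continuous bijection $\Phi\colon D\to\Omega$ (the Lempert straightening) sending a tangent vector $X$ to the corresponding point on its extremal. The map $\Phi$ is smooth but in general not holomorphic; one wants to show that the hypothesis $F_\Omega(w)=1$ forces $\Phi^{-1}$ to be holomorphic, thereby realizing $\Omega$ as biholomorphic to the balanced domain $\Omega':=D$ via a map sending $w$ to $0$.

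To exploit the equality $F_\Omega(w)=1$, I would first try to establish Conjecture~\ref{conj1} in dimension $n$, possibly only under the additional convexity hypothesis, where Lempert's foliation gives a powerful extra structure. Granted monotonicity, the identity $F_\Omega(w)=1$ and the limit relation of Theorem~\ref{thm1} would imply that the function $t\mapsto e^{-2nt}\lambda(\{G_{\Omega,w}<t\})$ is constant on $(-\infty,0]$, so that \eqref{est1} is in fact an equality for every $t\leq 0$. Equality throughout in such an Ohsawa--Takegoshi-type lower bound is expected to be strongly rigid: an extremal-function analysis, coupled with the uniqueness of the Bergman extremal at $w$, should give $G_{\Omega,w}=\log h_D\circ\Phi^{-1}$, where $h_D$ is the Minkowski functional of $D$. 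This identity of a plurisubharmonic function with the pullback of a logarithmically homogeneous model is precisely the statement that $\Phi^{-1}\colon\Omega\to D$ is holomorphic, completing the proof with $H=\Phi^{-1}$.

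The main obstacle lies exactly in promoting $F_\Omega(w)=1$ to pointwise equality in \eqref{est1} at every $t$, and then extracting holomorphic rigidity. Conjecture~\ref{conj1} is open for $n\geq 2$, and even the stronger one-dimensional Conjecture~\ref{conj2} is unresolved, so the very first step is already delicate. Moreover, the known equality cases of the Ohsawa--Takegoshi bound are essentially restricted to balls, so characterizing equality in the limit $t\to-\infty$ would require a new rigidity theorem---plausibly accessible here because the convexity of $\Omega$ allows one to replace analytic extremality by Lempert's geometric picture, but certainly nontrivial. A reasonable fallback strategy, bypassing Conjecture~\ref{conj1} altogether, would be to work directly with the Bergman-extremal $f\in\mathcal O(\Omega)$ at $w$: construct via $L^2$-extension the holomorphic functions on $\Omega$ corresponding to homogeneous polynomials on $D$, and show that $F_\Omega(w)=1$ forces them to assemble into a holomorphic map $\Omega\to D$ coinciding with $\Phi^{-1}$.
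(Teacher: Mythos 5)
First, be aware that this statement is posed as a \emph{conjecture}: the paper offers no proof of it, so there is no argument of the authors to compare yours against. Your ``if'' direction is correct and is exactly what the remarks in the introduction already give: $F$ is a biholomorphic invariant and equals $1$ at the centre of a pseudoconvex balanced domain, so any $\Omega$ biholomorphic to such a domain via a map sending $w$ to $0$ satisfies $F_\Omega(w)=1$. That easy half is presumably why only the converse is conjectured.

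The ``only if'' direction of your proposal is a programme rather than a proof, and one of its pivotal steps is in fact false. You assert that, granting Conjecture \ref{conj1}, the hypothesis $F_\Omega(w)=1$ forces $g(t):=e^{-2nt}\lambda(\{G_{\Omega,w}<t\})$ to be constant, hence equality in \eqref{est1} for every $t\le 0$. But $F_\Omega(w)=1$ only says $K_\Omega(w)=1/\lambda(I^A_\Omega(w))=1/\lim_{t\to-\infty}g(t)$; since $g$ is non-decreasing one has $1/g(t)\le 1/g(-\infty)=K_\Omega(w)$ for finite $t$, so equality is attained only in the limit and nothing forces $g(t)=g(-\infty)$ for finite $t$. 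Indeed constancy of $g$ \emph{cannot} follow from $F_\Omega(w)=1$: the Green function is a biholomorphic invariant but Lebesgue measure is not, so for $\Omega=H^{-1}(\Omega')$ with $\Omega'$ balanced and $H$ volume-distorting, $g$ is generically non-constant even though $F_\Omega(w)=1$ by your own ``if'' direction. Your proposed rigidity criterion would therefore exclude precisely the domains the conjecture is meant to characterize. Beyond this, every remaining ingredient --- Conjecture \ref{conj1} for $n\ge 2$, a rigidity theorem for the equality case of \eqref{est1}, and the holomorphy of the inverse of the Lempert straightening $\Phi$ (which is only smooth in general) --- is open, so the substantive half of the statement remains unproved; the only conclusion your argument actually establishes is the easy implication already implicit in the paper.
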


It was recently shown in \cite{GZ} that the equality 
holds in \eqref{suita} if and only if $\Omega$ 
is biholomorphic to $\Delta\setminus K$ for some
closed polar subset $K$, this was also conjectured 
by Suita in \cite{S}.

The paper is organized as follows: in Section \ref{s1} we 
show Theorems \ref{thm1} and \ref{tn1}. Upper bounds for the
Bergman kernel are discussed in Section \ref{s2}, we prove 
Proposition \ref{ann} and Theorem \ref{ub} there. Finally, in Section 
\ref{s3} the case of convex complex ellipsoids is treated. 

\section{Sublevel Sets of the Green Function \label{s1}}

\begin{proof}[Proof of Theorem \ref{thm1}] 
Without loss of generality we may assume that $w=0$. Write
$G:=G_{\Omega,0}$ and for $t\leq 0$ set 
  $$I_t:=e^{-t}\{G<t\}.$$
We can find $R>0$ such that $\Omega\subset B(0,R)$. Then
$\log(|z|/R)\leq G$ and $I_t\subset B(0,R)$. 
In our case by \cite{Z1} the function
  $$A(X)=\varlimsup_{\zeta\to 0}\big(G(\zeta X)
         -\log|\zeta|\big)$$
is continuous on $\mathbb C^n$ and $\varlimsup$ 
is equal to $\lim$. Therefore
  $$A(X)=\lim_{t\to-\infty}\big(G(e^tX)-t\big)$$
and by the Lebesgue bounded convergence theorem
  $$\lim_{t\to-\infty}\lambda(I_t)=\lambda(\{A<0\}).$$
\phantom{a}\vskip -37pt
\end{proof}

\begin{proof}[Proof of Theorem \ref{tn1}]
Set
  $$f(t):=\log\lambda(\{G<t\})-2t,$$
where  $G=G_{\Omega,w}$. It is enough to show that if
$t$ is a regular value of $G$ then $f'(t)\geq 0$. We have
  $$f'(t)=\frac{\displaystyle\frac d{dt}\lambda(\{G<t\})}
      {\lambda(\{G<t\})}-2.$$
The co-area formula gives
  $$\lambda(\{G<t\})=\int_{-\infty}^t\int_{\{G=s\}}
     \frac{d\sigma}{|\nabla G|}ds$$
and therefore
  $$\frac d{dt}\lambda(\{G<t\})=
     \int_{\{G=t\}}\frac{d\sigma}{|\nabla G|}.$$
By the Cauchy-Schwarz inequality
  $$\frac d{dt}\lambda(\{G<t\})\geq
    \frac{(\sigma(\{G=t\}))^2}{\displaystyle
      \int_{\{G=t\}}|\nabla G|d\sigma}
    =\frac{(\sigma(\{G=t\}))^2}{2\pi}.$$
The isoperimetric inequality gives
  $$(\sigma(\{G=t\}))^2\geq 4\pi\lambda(\{G<t\})$$
and we obtain $f'(t)\geq 0$.
\end{proof}

\section{Upper Bound for the Bergman kernel\label{s2}}

We first show that the reverse estimate to \eqref{msuita}
is not true in general.

\begin{proof}[Proof of Proposition \ref{ann}]
Since $z^j$, $j\in\mathbb Z$, is an orthogonal system
in $H^2(P_r)$ and
  $$||z^j||^2=\begin{cases}\dis\frac\pi{j+1}\big(1-r^{2j+2}\big)\
     &j\neq -1,\\ -2\pi\log r&j=-1,\end{cases}$$
we have
  $$K_{P_r}(w)=\frac 1{\pi|w|^2}\left(\frac 1{-2\log r}
      +\sum_{j\in\mathbb Z}\frac{j|w|^{2j}}{1-r^{2j}}\right)$$
and
\begin{equation}\label{kann}
K_{P_r}(\sqrt r)\geq\frac 1{-2\pi r\log r}.
\end{equation}
To estimate $c_{P_r}$ from above consider the mapping
  $$p(\zeta)=\exp\left(\frac{\log r}{\pi i}\Log
     \left(i\frac{1+\zeta}{1-\zeta}\right)\right),\ \ \ 
     \zeta\in\Delta,$$
where $\Log$ is the principal branch of the logarithm defined 
on $\mathbb C\setminus(-\infty,0]$. We have $p(0)=\sqrt r$
and $p'(0)=-2i\sqrt r\log r/\pi$. Also
  $$G_{P_r}(p(\zeta),\sqrt r)\leq\log|\zeta|$$
and therefore
  $$c_{P_r}(\sqrt r)\leq\frac 1{|p'(0)|}
      =\frac\pi{-2\sqrt r\log r}.$$
Combining this with \eqref{kann} we get \eqref{ann1}.
\end{proof}

Next, we show the reverse inequality to \eqref{msuita} for 
$\mathbb C$-convex domains.

\begin{proof}[Proof of Theorem \ref{ub}]
Write $I=I^A_\Omega(w)$. We may assume that 
$w=0$. We claim that it is enough to show that
\begin{equation}\label{cont}
I\subset\sqrt C\,\Omega.
\end{equation}
Indeed, since $I$ is balanced we would then have
  $$K_\Omega(0)\leq K_{I/\sqrt C}(0)
      =\frac 1{\lambda(I/\sqrt C)}=\frac{C^n}{\lambda(I)}.$$
The proof of \eqref{cont} will be similar to the proof of 
Proposition 1 in \cite{NPZ}. Choose $X\in I$ and by $L$ denote
the complex line generated by $X$. Let 
$a$ be a point from $L\cap\partial\Omega$ with the smallest
distance to the origin. We can find a hyperplane $H$ in 
$\mathbb C^n$ such that $H\cap\Omega=\emptyset$ (cf. \cite{H},
Theorem 4.6.8). Let $D$ be the set of those $\zeta\in\mathbb C$ 
such that $\zeta X$ belongs to the projection of $\Omega$ on $L$
along $H$. Then $D$ is a simply connected domain (cf.
\cite{H}, Proposition 4.6.7). Let $\varphi$ be a biholomorphic
mapping $\Delta\rightarrow D$ such that $\varphi(0)=0$.
We then have
  $$0>\varlimsup\big(G_{\Omega,0}(\zeta X)-\log|\zeta|\big)
     \geq\varlimsup\big(G_{D,0}(\zeta)-\log|\zeta|\big)
     =-\log|\varphi'(0)|.$$
By the Koebe quarter theorem $|\varphi'(0)|\leq 4r$, 
where $r$ is the distance from the origin to $\partial D$.
Since $r=|a|/|X|$, we obtain $|X|<4|a|$. This gives \eqref{cont} 
for $\mathbb C$-convex domains with $C=16$. If $\Omega$ is convex 
then so is $D$ and we may assume that it is a half-plane.
Then $|\varphi'(0)|\leq 2r$ and we get \eqref{cont} with $C=4$.
Finally, if $\Omega$ is symmetric then we may assume that $D$
is a strip centered at the origin and we get 
$|\varphi'(0)|\leq 4r/\pi$.
\end{proof}

\section{Complex Ellipsoids \label{s3}}

We first recall a general formula from \cite{JPZ} (it is
in fact a consequence of Lempert's theory \cite{L}) for geodesics 
in convex complex ellipsoids 
  $$\mathcal E(p)
     =\{z\in\mathbb C^n: |z_1|^{2p_1}+\dots+|z_n|^{2p_n}<1\},$$
where $p=(p_1,\dots,p_n)$, $p_j\geq 1/2$. For $A\subset\{1,\dots,n\}$ 
holomorphic mappings $\varphi:\Delta\rightarrow\mathcal E(p)$ of the 
form 
\begin{equation}\label{geod}
  \varphi_j(\zeta)
     =\begin{cases}\dis a_j\frac{\zeta-\alpha_j}{1-\bar\alpha_j\zeta}
       \left(\frac{1-\bar\alpha_j\zeta}
           {1-\bar\alpha_0\zeta}\right)^{1/p_j},\ &j\in A\\ \\
      \dis a_j\left(\frac{1-\bar\alpha_j\zeta}
         {1-\bar\alpha_0\zeta}\right)^{1/p_j},
              &j\notin A\end{cases},
\end{equation}
where $a_j\in\mathbb C_\ast$, $\alpha_j\in\Delta$ for $j\in A$, 
$\alpha_j\in\bar\Delta$ for $j\notin A$,
  $$\alpha_0=|a_1|^{2p_1}\alpha_1+\dots+|a_n|^{2p_n}\alpha_n,$$
and
  $$1+|\alpha_0|^2=|a_1|^{2p_1}(1+|\alpha_1|^2)+\dots
     +|a_n|^{2p_n}(1+|\alpha_n|^2),$$
form the set of almost all geodesics in $\Omega$ (possible
exceptions form a lower-dimensional set). A component $\varphi_j$
has a zero in $\Delta$ if and only if $j\in A$. We have
\begin{equation*}
  \varphi_j(0)
     =\begin{cases}\dis -a_j\alpha_j,\ &j\in A\\
      \dis a_j, &j\notin A\end{cases},
\end{equation*}
and
\begin{equation*}
  \varphi_j'(0)
     =\begin{cases}\dis a_j\left(1+\big(\frac 1{p_j}-1\big)|
        \alpha_j|^2-\frac{\alpha_j\bar\alpha_0}{p_j}\right),\ 
            &j\in A\\ \\
      \dis a_j\frac{\bar\alpha_0-\bar\alpha_j}{p_j},
              &j\notin A\end{cases}.
\end{equation*}
For $w\in\mathcal E(p)$ the set of vectors $\varphi'(0)$ where 
$\varphi(0)=w$ forms a subset of 
$\partial I^K_{\mathcal E(p)}(w)$ of a full measure.

Now assume that $w=(b,0,\dots,0)$. There are two possibilities: either 
$A=\{1,\dots,n\}$ or $A=\{2,\dots,n\}$. 
Since $\varphi(0)=w$, it follows that $\alpha_2=\dots=\alpha_n=0$, 
hence $\alpha_0=|a_1|^{2p_1}\alpha_1$ and
\begin{equation}\label{aal}
  1+|a_1|^{4p_1}|\alpha_1|^2=|a_1|^{2p_1}(1+|\alpha_1|^2)+|a_2|^{2p_2} 
      +\dots+|a_n|^{2p_n}.
\end{equation}
Moreover,
  $$\begin{cases} a_1\alpha_1=-b,\
       &1\in A\\ a_1=b,&1\notin A\end{cases}.$$
We will get vectors $X=\varphi'(0)$ from $\partial I^K_{\mathcal E(p)}(w)$, 
where
\begin{equation}\label{x1}
X_1=\begin{cases}\dis-\frac b{\alpha_1}\left(1+\big(\frac 1{p_1}
      -1\big)|\alpha_1|^2-\frac{b^{2p_1}|\alpha_1|^{2-2p_1}}{p_1}\right),\ 
             &1\in A\\ \\
   \dis -\bar\alpha_1\frac{b(1-b)}{p_1},&1\notin A\end{cases}
\end{equation}
and $X_j=a_j$, $j=2,\dots,n$. By \eqref{aal} the parameters are related by
  $$|a_2|^{2p_2}+\dots+|a_n|^{2p_n}
       =\begin{cases}\dis (1-b^{2p_1}|\alpha_1|^{-2p_1})
         (1-b^{2p_1}|\alpha_1|^{2-2p_1}),\ 
            &1\in A\\ 
     (1-b^{2p_1})(1-b^{2p_1}|\alpha_1|^2),&1\notin A\end{cases}.$$
If now $p_1=1/2$ as in Theorem \ref{ell} then by \eqref{x1}
  $$|\alpha_1|=\begin{cases}\dis\frac{2b^2+|X_1|-
      \sqrt{(2b^2+|X_1|)^2-4b^2}}{2b},\ &1\in A\\ \\
   \dis \frac{|X_1|}{2b(1-b)},&1\notin A\end{cases}.$$
After simple transformation we will obtain the following result:

\begin{thm}\label{ell8}
Assume that $p_1=1/2$, $p_j\geq 1/2$ for $j\geq 2$, and $0<b<1$. 
Then
  $$I_{\mathcal E(p)}^K((b,0,\dots,0))=\{X\in\mathbb C^n:
     |X_2|^{2p_2}+\dots+|X_n|^{2p_n}\leq\gamma(|X_1|)\},$$
where 
  $$\gamma(r)=\begin{cases}\dis 1-b-\frac{r^2}{4b(1-b)},\ 
     &\dis r\leq 2b(1-b)\\ \\ 
      \dis 1-b^2-r,\ 
           &\dis r>2b(1-b)\end{cases}.$$
\phantom{a}\vskip -31.5pt\qed
\end{thm}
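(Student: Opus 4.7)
The plan is to use the explicit parametrization of the boundary of $I^K_{\mathcal E(p)}(w)$ recalled immediately above the statement. Since the set $\{\varphi'(0) : \varphi(0) = w,\ \varphi \text{ of the form } \eqref{geod}\}$ has full measure in $\partial I^K_{\mathcal E(p)}(w)$, and $I^K_{\mathcal E(p)}(w)$ is a balanced closed set, describing the indicatrix reduces to eliminating the parameters $\alpha_1, a_2, \dots, a_n$ from the equations for $X$ displayed before the theorem, using only the constraint on $|a_2|^{2p_2} + \dots + |a_n|^{2p_n}$ listed there, and then reading off $\gamma(|X_1|)$ as the resulting upper bound for $|X_2|^{2p_2}+\dots+|X_n|^{2p_n}$.

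For the case $1 \notin A$, substituting $p_1 = 1/2$ in the formula for $X_1$ gives $|\alpha_1| = |X_1|/(2b(1-b))$ at once, and the admissibility $\alpha_1 \in \bar\Delta$ translates to $|X_1| \le 2b(1-b)$. Plugging into $(1-b^{2p_1})(1-b^{2p_1}|\alpha_1|^2) = (1-b)(1-b|\alpha_1|^2)$ produces the first branch of $\gamma$.

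For the case $1 \in A$ with $p_1 = 1/2$, the formula for $X_1$ becomes the quadratic $b s^2 - (|X_1|+2b^2)s + b = 0$ in $s := |\alpha_1|$. The key observation is that the product of the two positive roots is $1$, so the admissibility condition $|\alpha_1|<1$ is equivalent to positive discriminant, i.e. $|X_1| > 2b(1-b)$. More importantly, Vieta gives $b(s + s^{-1}) = |X_1| + 2b^2$ without ever extracting a square root, whence
\[
(1 - b|\alpha_1|^{-1})(1 - b|\alpha_1|) = 1 + b^2 - b(s + s^{-1}) = 1 - b^2 - |X_1|,
\]
which is the second branch of $\gamma$. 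A direct substitution shows that the two branches of $\gamma$ agree at the transition value $r = 2b(1-b)$, where both evaluate to $(1-b)^2$, so the resulting region is closed and the boundary pieces fit together.

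The only real obstacle is spotting this algebraic collapse: the explicit formula for $|\alpha_1|$ in the case $1 \in A$ carries a square root that would a priori force $\gamma$ to be a complicated expression, but the symmetric combination $(1-bs^{-1})(1-bs)$ is exactly what Vieta linearizes. Once this is observed, the rest is bookkeeping—tracking which branch of $(|X_1|, |\alpha_1|)$ is admissible in each regime and confirming the fit at $r = 2b(1-b)$.
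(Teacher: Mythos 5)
Your proposal is correct and follows essentially the same route as the paper, which derives the theorem by eliminating $\alpha_1$ from the geodesic parametrization \eqref{geod} (the paper writes the explicit square-root formula for $|\alpha_1|$ and leaves the rest as a ``simple transformation''). Your Vieta-based shortcut $(1-bs^{-1})(1-bs)=1+b^2-b(s+s^{-1})=1-b^2-|X_1|$ is a cleaner way to carry out exactly that transformation, and your checks of the admissibility ranges and the matching value $(1-b)^2$ at $r=2b(1-b)$ are all accurate.
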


\begin{proof}[Proof of Theorem \ref{ell}]
Denoting
  $$\omega=\lambda(\{z\in\mathbb C^{n-1}:|z_1|^{2m}+\dots+
      |z_{n-1}|^{2m}<1\}$$
we will get from Theorem \ref{ell8}
\begin{equation}\label{vol}
\begin{aligned}
\lambda(I_\Omega^K((b,0,\dots,0)))
  &=2\pi\omega\int_0^{1-b^2}r(\gamma(r))^{(n-1)/m}dr\\
  &=2\pi\omega(1-b)^a\frac{(1-b)^a+2ab}{a(a-1)}.
\end{aligned}
\end{equation}
It remains to compute the Bergman kernel. By the deflation
method from \cite{BFS} we obtain
  $$K_\Omega((b,0,\dots,0))=\frac{\lambda(\mathcal E(1/2,m/(n-1)))}
      {\lambda(\Omega)}K_{\mathcal E(1/2,m/(n-1))}((b,0)).$$
By Example 12.1.13 in \cite{JP} (see also formula (9) in
\cite{BFS}) 
  $$K_{\mathcal E(1/2,1/p)}((b,0))=\frac{p+1}{4\pi^2b}
      \big((1-b)^{-p-2}-(1+b)^{-p-2}\big).$$
We also have $\lambda(\mathcal E(1/2,1/p)=2\pi^2/((p+1)(p+2))$
and $\lambda(\Omega)=2\pi\omega/(a(a-1))$. It follows that
  $$K_\Omega((b,0,\dots,0))=\frac{a-1}{4\pi\omega b}
    \big((1-b)^{-a}-(1+b)^{-a}\big)$$
and combining this with \eqref{vol} gives \eqref{form}.
\end{proof}

\end{document}